\newtheorem{theorem}{Theorem}
\newtheorem{corollary}{Corollary}
\newtheorem{lemma}{Lemma}
\newtheorem{thmx}{Theorem}
\newtheorem{remark}{Remark}
\begin{document}

\title[A REFINEMENT OF CAUCHY'S THEOREM ON THE ZEROS OF QUATERNION POLYNOMIALS]{A REFINEMENT OF CAUCHY'S THEOREM ON THE ZEROS OF QUATERNION POLYNOMIALS}
\author{N. A. Rather$^{1}$}
\author{Danish Rashid$^{2}$}
\author{Tanveer Bhat$^3$}
\address{$^{1,2,3}$ Department of Mathematics, University of Kashmir, Srinagar-190006, India}
\email{$^1$dr.narather@gmail.com $^2$danishmath1904@gmail.com $^3$tanveerbhat054@gmail.com}

\begin{abstract}
In this paper, we shall present an interesting and significant refinement of a classical result of  Cauchy about the moduli of the zeros of a quaternionic polynomial. As an application of this result we shall obtain zero-free regions of polynomials having quaternionic coefficients.\\
\smallskip
\newline
\noindent \textbf{Keywords:} Quaternionic eigenvalues, zeros of polynomial, Companion matrix. \\
\noindent \textbf{Mathematics Subject Classification (2020)}: 30A10, 30C10, 30C15.
\end{abstract}

\maketitle

\section{\textbf{INTRODUCTION }}
Concerning the location of zeros of the quaternion polynomials, the first study was done by Eilenberg and Niven \cite{ES}, \cite{NI}. After these fundamental works, the question of locating the zeros of quaternion polynomials have been vastly investigated. Let us first introduce the background information about the quaternions and quaternionic polynomials. The quaternions were first introduced by an Irish mathematician Sir William Rowan Hamilton in 1843. The quaternions are mathematical entities used to represent rotations in $3$-D space. They extend the concept of complex numbers by adding two more imaginary units providing a concise way to perform spatial rotations. The quaternion number system is represented by the letter $\mathbb{H}$ and are generally represented as $q = \alpha + i\beta + j\gamma + k\delta \in \mathbb{H}$, where $\alpha, \beta, \gamma, \delta \in R$ and $i, j, k$ are the fundamental quaternion units such that $i^{2} = j^{2} = k^{2} = ijk = -1$. 
Depending upon the position of the coefficients, the quaternion polynomial of degree n  in indeterminate $q$ is defined as 
$f(q) = q^{n} +q^{n-1}a_{1}+ \cdots +qa_{n-1} + a_n$ or $g(q) = q^{n} +a_{1}q^{n-1}+ \cdots +a_{n-1}q + a_n.$\\
\textbf{The quaternion Companion Matrix}: The $n \times n$ companion matrix of a monic quaternion polynomial of the form $f(q) = q^{n} +q^{n-1}a_{1}+...+ qa_{n-1} + a_n,$ is given by
$$C_f=\begin{bmatrix}
0& 0& 0& \cdots & 0& -a_n\\
1& 0& 0& \cdots & 0& -a_{n-1}\\
0& 1& 0& \cdots & 0& -a_{n-2}\\
0& 0& 1& \cdots & 0& -a_{n-3}\\
\vdots\\
0& 0& 0& \cdots& 1& -a_{1}\\
\end{bmatrix},$$
whereas, the $n \times n$ companion matrix for a monic quaternion polynomial of the form $g(q) = q^{n} + a_{1}q^{n-1} + \cdots +a_{n-1}q + a_n,$ is given by
$$C_g=\begin{bmatrix}
0& 1& 0& \cdots & 0\\
0& 0& 1& \cdots & 0\\
\vdots\\
0& 0& 0& \cdots & 1\\
-a_{n}& -a_{n-1}& -a_{n-2}& \cdots& -a_{1}\\
\end{bmatrix}.$$

\textbf{Right Eigenvalue:} Given an $n \times n$ matrix $A = [a_{\mu\nu}]$ of quaternions, $\lambda \in \mathbb{H}$ is called the right eigenvalue of A, if $Ax = x\lambda$ for some non zero eigenvector $x = [x_1, x_2, \cdots x_n]^T$ of quaternions.\\
 
\textbf{Left Eigenvalue:}  Given an $n \times n$ matrix $A = [a_{\mu\nu}]$ of quaternions, $\lambda \in \mathbb{H}$ is called the left eigenvalue of A, if $Ax = \lambda x$ for some non zero eigenvector $x = [x_1, x_2, \cdots, x_n]^T$ of quaternions.\\
To estimate the zeros of a polynomial is a long standing classical problem. It is an interesting area of research for engineers as well as mathematicians and many results on the topic are available in the literature. One of the famous result regarding the distribution of zeros of the polynomials  known as  Enestr\"{o}m-Kakeya theorem is as follows:
\begin{thmx}
Let $p(z) = \sum\limits_{j=0}^{n}a_{j}z^{j}$ be a polynomial of degree $n$ such that $0 < a_{0} \leq a_{1} \leq \cdots \leq a_{n}$, then all the zeros of $p(z)$ lie in $\left|z\right| \leq 1$. 
\end{thmx}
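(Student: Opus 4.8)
The plan is to use the classical device of multiplying $p(z)$ by the linear factor $(1-z)$, which converts the monotonicity hypothesis $0<a_0\le a_1\le\cdots\le a_n$ into a statement about a telescoping sum with nonnegative coefficients. Concretely, I would set $f(z)=(1-z)p(z)$ and expand
$$f(z)=(1-z)\sum_{j=0}^{n}a_j z^j=a_0+\sum_{j=1}^{n}(a_j-a_{j-1})z^j-a_n z^{n+1},$$
observing that each $a_j-a_{j-1}\ge 0$ and that $a_0+\sum_{j=1}^{n}(a_j-a_{j-1})=a_n$.

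Next I would argue by contrapositive: suppose $z_0$ is a zero of $p$ with $|z_0|>1$. First note $z_0\ne 1$, since $p(1)=\sum_{j=0}^{n}a_j\ge a_0>0$; hence $z_0$ is also a zero of $f$, and so $0=|f(z_0)|$. Applying the reverse triangle inequality with the top term isolated gives
$$0=|f(z_0)|\ge a_n|z_0|^{n+1}-a_0-\sum_{j=1}^{n}(a_j-a_{j-1})|z_0|^j.$$
Since $|z_0|>1$ we have $|z_0|^j\le |z_0|^n$ for $0\le j\le n$, so the subtracted terms are bounded above by $|z_0|^n\left(a_0+\sum_{j=1}^{n}(a_j-a_{j-1})\right)=a_n|z_0|^n$. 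Therefore
$$0\ge a_n|z_0|^{n+1}-a_n|z_0|^n=a_n|z_0|^n\left(|z_0|-1\right)>0,$$
a contradiction. Hence $p$ has no zero with modulus exceeding $1$, i.e. all zeros of $p(z)$ lie in $|z|\le 1$.

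The only real subtlety — hardly an obstacle — is keeping track of where the inequalities must be strict: the final step uses both $a_n>0$ (which follows from $a_n\ge a_0>0$) and $|z_0|>1$, and one must separately dispose of the point $z=1$, which the factor $(1-z)$ artificially introduces as a zero of $f$ but which is manifestly not a zero of $p$. Everything else is routine bookkeeping. If one preferred a matrix-theoretic argument matching the companion-matrix setup recalled in the introduction, an alternative would be to apply a Geršgorin-type estimate to a suitable diagonal rescaling of the companion matrix $C_p$, but the multiplication-by-$(1-z)$ proof is shorter, and it is the one I would present here.
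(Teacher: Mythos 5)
Your argument is correct. It is the standard classical proof of the Eneström--Kakeya theorem: multiply by $(1-z)$ to telescope the coefficients into the nonnegative differences $a_j-a_{j-1}$, isolate the leading term $a_nz^{n+1}$ with the reverse triangle inequality, and use $|z_0|^j\le|z_0|^n$ together with $a_0+\sum_{j=1}^{n}(a_j-a_{j-1})=a_n$ to force the contradiction $a_n|z_0|^n(|z_0|-1)\le 0$. The paper states this result as background (Theorem A) and gives no proof of its own, so there is nothing to compare against; your write-up would serve as a self-contained proof. One small remark: the aside about $z_0\ne 1$ is not actually needed in the direction you use it --- every zero of $p$ is automatically a zero of $f=(1-z)p$, and the hypothesis $|z_0|>1$ already excludes $z_0=1$; the point $z=1$ only matters when passing from zeros of $f$ back to zeros of $p$, which your contrapositive never does. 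Everything else, including the bookkeeping of where strictness is required ($a_n\ge a_0>0$ and $|z_0|>1$), is in order.
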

In last two years various results were proved by several authors regarding the location of zeros of quaternion polynomials. Recently, Carney et al. \cite{gr} extended Enestr\"{o}m-Kakeya theorem to quaternion settings by proving following result.
\begin{thmx}\label{ek}
If $p(q) = q^na_n+q^{n-1}a_{n-1}+q^{n-2}a_{n-2}+...+qa_1+a_0$ is a polynomial of degree $n$ (where $q$ is a quaternionic
variable) with real coefficients satisfying $0 \leq a_0 \leq a_1 \leq ... \leq a_n,$ then all the zeros of $p$ lie
in $|q| \leq 1.$
\end{thmx}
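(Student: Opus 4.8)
The plan is to adapt the classical Eneström--Kakeya multiplier argument, checking only that non-commutativity causes no harm. Since $a_0,\dots,a_n\in\mathbb{R}$, each coefficient commutes with every power of the quaternionic variable, so for a fixed $q\in\mathbb{H}$ the quantity $p(q)=\sum_{j=0}^n q^j a_j=\sum_{j=0}^n a_j q^j$ lies in the commutative subring of $\mathbb{H}$ generated by $q$ and $\mathbb{R}$; in particular I may multiply by $1-q$ and expand by distributivity and telescoping to obtain
$$(1-q)\,p(q)=a_0+\sum_{j=1}^{n}(a_j-a_{j-1})\,q^{j}-a_n\,q^{n+1}.$$
If $|q|\le 1$ there is nothing to prove, so it suffices to show that $p$ has no zero with $|q|>1$.

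Next I would suppose that $|q|>1$ and $p(q)=0$. Then $(1-q)p(q)=0$, and the identity above rearranges to $a_n q^{n+1}=a_0+\sum_{j=1}^{n}(a_j-a_{j-1})q^{j}$. Taking quaternionic moduli and using that the modulus on $\mathbb{H}$ is multiplicative ($|q^{m}|=|q|^{m}$) and satisfies the triangle inequality, together with $a_j-a_{j-1}\ge 0$ for every $j$, yields
$$a_n|q|^{n+1}=\Big|a_0+\sum_{j=1}^{n}(a_j-a_{j-1})q^{j}\Big|\le a_0+\sum_{j=1}^{n}(a_j-a_{j-1})|q|^{j}.$$
Because $|q|>1$ we have $|q|^{j}\le |q|^{n}$ for all $0\le j\le n$, so the right-hand side is at most $|q|^{n}\big(a_0+\sum_{j=1}^{n}(a_j-a_{j-1})\big)=a_n|q|^{n}$, the inner sum telescoping to $a_n$. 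Hence $a_n|q|^{n+1}\le a_n|q|^{n}$; since $p$ has exact degree $n$ and the coefficients are nonnegative, $a_n>0$, and dividing by $a_n|q|^{n}>0$ gives $|q|\le 1$, a contradiction. Therefore every zero of $p$ satisfies $|q|\le 1$.

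The only step that might look delicate is whether non-commutativity invalidates either the expansion of $(1-q)p(q)$ or the moduli estimate; both go through unchanged precisely because the coefficients are real, and the sole properties of $\mathbb{H}$ actually used are the multiplicativity and the triangle inequality of the modulus. An alternative route, closer to the companion-matrix language of the introduction, would be to identify the zeros of $p$ with the right eigenvalues of $C_p$ and bound those directly; but the multiplier argument above is shorter and entirely elementary, so I would present it as the proof.
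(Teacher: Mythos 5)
Your argument is correct: because the coefficients are real (hence central in $\mathbb{H}$), the telescoping expansion of $(1-q)p(q)$ is valid, and the only properties of the quaternionic modulus you invoke --- multiplicativity and the triangle inequality --- both hold, so the classical Enestr\"om--Kakeya multiplier proof carries over verbatim. Note that the paper itself gives no proof of this statement (it is quoted as background from Carney et al.\ \cite{gr}), and your argument is essentially the one in that cited source, so there is nothing further to compare.
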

For complex case, concerning the location of the zeros, the famous Cauchy's theorem \cite{MM} can be stated as:
\begin{thmx}
If $p(z) = \sum\limits_{j=0}^{n}a_{j}z^{j}$ is a polynomial of degree $n$ where $a_{n} \neq 0$ with complex coefficients, then all the zeros of $p(z)$ lie in $\left|z\right| \leq 1 + M$, where $M = \underset{0\leq j\leq{n-1}}{Max}\left|\frac{a_{j}}{a_{n}}\right|$. 
\end{thmx}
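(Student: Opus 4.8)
The plan is to establish the (slightly stronger) statement that every zero $z_0$ of $p$ satisfies $|z_0|<1+M$ by a direct estimation argument. First I would dispose of the easy case: since each $|a_j/a_n|\ge 0$ we have $M\ge 0$, so any zero with $|z_0|\le 1$ already lies in the disc $|z|\le 1+M$. It therefore suffices to take a zero $z_0$ with $|z_0|>1$ and bound it.

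For such a $z_0$, I would use $p(z_0)=0$ together with $a_n\ne 0$ to isolate the top-degree term, $a_nz_0^{\,n}=-\sum_{j=0}^{n-1}a_jz_0^{\,j}$, and pass to moduli:
\[
|a_n|\,|z_0|^{n}\le \sum_{j=0}^{n-1}|a_j|\,|z_0|^{j}\le M|a_n|\sum_{j=0}^{n-1}|z_0|^{j}=M|a_n|\,\frac{|z_0|^{n}-1}{|z_0|-1},
\]
where the second inequality uses $|a_j|\le M|a_n|$ and the last equality is the finite geometric sum, which is where the hypothesis $|z_0|>1$ enters. Bounding $|z_0|^{n}-1<|z_0|^{n}$ and cancelling the positive factor $|a_n|\,|z_0|^{n}/(|z_0|-1)$ gives $|z_0|-1<M$, i.e.\ $|z_0|<1+M$, which completes the argument.

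I do not expect a genuine obstacle here: the only points requiring care are (i) separating off the range $|z_0|\le 1$ so that the geometric-series identity and the subsequent division are legitimate, and (ii) checking that the quantity being cancelled is strictly positive. One could instead phrase the proof through the companion matrix $C_p$ introduced above, interpreting the zeros of $p$ as its eigenvalues and bounding the spectral radius by a suitable operator norm or via Gershgorin discs; this yields the same constant $1+M$ but is heavier than the elementary estimate needs, so the direct route is the one I would present.
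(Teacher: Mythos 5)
Your argument is correct. One small point: having announced the strict conclusion $|z_0|<1+M$, you then dispose of the range $|z_0|\le 1$ only with the non-strict containment $|z_0|\le 1+M$; this is harmless since the theorem as stated only asks for $|z|\le 1+M$, but you should either drop the claim of strictness or note that $|z_0|\le 1$ gives $|z_0|<1+M$ whenever $M>0$ (and that $M=0$ forces $p(z)=a_nz^n$, whose only zero is $0$). Beyond that, the key steps — isolating $a_nz_0^n$, the bound $|a_j|\le M|a_n|$, the geometric sum valid because $|z_0|>1$, and the cancellation of the positive factor $|a_n|\,|z_0|^n/(|z_0|-1)$ — are all sound. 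For comparison: the paper states this result as background (Theorem C) with a reference to Marden and gives no proof of its own, so there is nothing to match your argument against directly; yours is the standard elementary estimate. The alternative route you sketch at the end — companion matrix plus Ger\v{s}gorin balls — is in fact exactly the machinery the paper deploys (Lemmas 1 and 2) to prove its quaternionic analogues, where the elementary complex-analytic manipulation is less readily available; in the complex setting your direct estimate is the lighter and preferable proof.
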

Recently, Dar et al. \cite{RD} proved the following quaternion version of Cauchy's theorem.
\begin{thmx}\label{L1}
If $f(q) = q^{n} +q^{n-1}a_{1}+ \cdots +qa_{n-1} + a_n$ is a quaternion polynomial with quaternion coefficients and $q$ is quaternionic variable, then all the zeros of $f(q)$ lie inside the ball \quad $|q|\leq 1+ \underset{1\leq {\nu} \leq{n}}{Max}|a_{\nu}|.$
\end{thmx}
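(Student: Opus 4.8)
The plan is to argue directly from the equation $f(q)=0$ by passing to quaternionic moduli, mirroring the classical complex proof, the only extra care being that $\mathbb{H}$ is non-commutative. I would use just two properties of the modulus on $\mathbb{H}$: it is multiplicative, $|pq|=|p||q|$ (hence $|q^{n}|=|q|^{n}$), and it satisfies the triangle inequality; neither property is affected by the side on which the coefficients sit.

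First I would fix a zero $q_{0}$ of $f$, so that $q_{0}^{n}+q_{0}^{n-1}a_{1}+\cdots+q_{0}a_{n-1}+a_{n}=0$, and put $M=\max_{1\le\nu\le n}|a_{\nu}|$. If $|q_{0}|\le 1$ there is nothing to prove, since $1\le 1+M$, so I may assume $|q_{0}|>1$. Rearranging the defining equation gives $q_{0}^{n}=-\bigl(q_{0}^{n-1}a_{1}+\cdots+q_{0}a_{n-1}+a_{n}\bigr)=-\sum_{\nu=1}^{n}q_{0}^{\,n-\nu}a_{\nu}$, and taking moduli, then applying the triangle inequality and multiplicativity, yields
\[
|q_{0}|^{n}\;\le\;\sum_{\nu=1}^{n}|q_{0}|^{\,n-\nu}\,|a_{\nu}|\;\le\;M\sum_{\nu=1}^{n}|q_{0}|^{\,n-\nu}\;=\;M\cdot\frac{|q_{0}|^{n}-1}{|q_{0}|-1}\;<\;\frac{M\,|q_{0}|^{n}}{|q_{0}|-1}.
\]
Dividing through by $|q_{0}|^{n}>0$ gives $|q_{0}|-1<M$, that is $|q_{0}|\le 1+M$, which is exactly the claimed bound.

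An alternative route, closer to the companion-matrix framework recalled above, would be to observe that the zeros of $f$ coincide with the right eigenvalues of $C_{f}$ and then invoke a quaternionic Gershgorin-type localization: each row of $C_{f}$ has zero diagonal entry and the moduli of its off-diagonal entries sum to at most $1+M$, forcing every right eigenvalue into $|q|\le 1+M$. I expect the direct modulus estimate to be the cleaner of the two, and I do not anticipate a serious obstacle; the only point requiring attention is to keep each coefficient $a_{\nu}$ on its correct (right-hand) side when rearranging $f(q_{0})=0$ — though since the argument ultimately uses only $|\cdot|$, the non-commutativity of $\mathbb{H}$ never genuinely interferes.
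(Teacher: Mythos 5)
Your primary argument is correct, but it is a genuinely different route from the one this paper (and the cited source, Dar et al.) takes. You argue directly from $f(q_{0})=0$: rearranging to $q_{0}^{n}=-\sum_{\nu=1}^{n}q_{0}^{\,n-\nu}a_{\nu}$, using multiplicativity of the quaternionic norm and the triangle inequality, and summing the geometric series to get $|q_{0}|-1<M$ when $|q_{0}|>1$. This is sound: the norm on $\mathbb{H}$ is genuinely multiplicative, and since you only ever estimate moduli, non-commutativity never enters; the edge cases ($|q_{0}|\le 1$, $M=0$) are handled or vacuous. The paper's framework, by contrast, runs everything through the companion matrix $C_{f}$ together with a Ger\v{s}gorin-type localization of \emph{left} eigenvalues (Lemma \ref{L1}) and the fact that left eigenvalues of $D^{-1}C_{f}D$ coincide with the zeros of $f$ (Lemma \ref{L2}); Theorem D itself is obtained in the reference by applying Ger\v{s}gorin to the rows of $C_{f}$, whose off-diagonal row sums are bounded by $1+M$. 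Your direct estimate is the more elementary of the two and needs no eigenvalue machinery, while the matrix approach is the one that generalizes to the weighted similarity transformations $P^{-1}C_{f}P$ exploited in Lemma \ref{L3} and Theorem \ref{tA}. One small caution about your sketched alternative: you speak of \emph{right} eigenvalues of $C_{f}$, but the Ger\v{s}gorin lemma available here is for \emph{left} eigenvalues, and for quaternionic matrices these are distinct notions; the zeros-of-$f$ correspondence used in this line of work is with left eigenvalues, so the alternative route should be phrased accordingly. This does not affect your main argument, which stands on its own.
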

The theorem $D$ was refined by Rather et al. \cite{NR} by proving:
\begin{thmx}\label{od}
Let $f(q) = q^n + q^{n-1}a_1 + q^{n-2}a_2 + \cdots + qa_{n-1} + a_{n}$ be a monic quaternion polynomial of degree n with quaternion coefficients and q be a quaternion variable. If $\alpha_2$ $\geq$ $\alpha_3$ $\geq$ $\cdots$ $\geq$ $\alpha_n$ are ordered positive numbers,
\begin{equation}\label{d1}
 \alpha_\nu = \frac{|a_\nu|}{r^\nu}, \qquad \nu = 2,3, \cdots ,n.
\end{equation}
where r is a positive real number. Then all the zeros of f(q) lie in the union of balls\\
$\left\lbrace q \in H : |q|\leq r(1+\alpha_2)\right\rbrace$ $\hspace{5 mm}$  and $\hspace{5 mm}$ $\left\lbrace q\in H :|q+a_1| \leq r\right\rbrace.$ 
\end{thmx}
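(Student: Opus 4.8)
The plan is to argue directly at the level of the polynomial, avoiding the companion matrix, and to use only the two algebraic facts that survive non-commutativity: the quaternion norm is multiplicative, $|q_{1}q_{2}| = |q_{1}||q_{2}|$, and any two powers of a single quaternion commute, so that $q^{n}+q^{n-1}a_{1}=q^{n-1}(q+a_{1})$ by left-distributivity. Fix a zero $q\in\mathbb{H}$ of $f$. If $|q|\le r$, then since $\alpha_{2}>0$ we have $|q|\le r< r(1+\alpha_{2})$, so $q$ already lies in the first ball and there is nothing to prove; hence from now on I assume $|q|>r$ (in particular $q\neq0$).

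Next I would rewrite $f(q)=0$ in the factored form
\[
q^{n-1}(q+a_{1}) \;=\; -\bigl(q^{n-2}a_{2}+q^{n-3}a_{3}+\cdots+qa_{n-1}+a_{n}\bigr),
\]
pass to moduli using multiplicativity and the triangle inequality, and then substitute $|a_{\nu}|=\alpha_{\nu}r^{\nu}$ together with the ordering hypothesis $\alpha_{\nu}\le\alpha_{2}$ for $\nu\ge2$. This gives
\[
|q|^{n-1}|q+a_{1}| \;\le\; \sum_{\nu=2}^{n}|q|^{\,n-\nu}|a_{\nu}| \;\le\; \alpha_{2}\sum_{\nu=2}^{n}|q|^{\,n-\nu}r^{\nu} \;=\; \alpha_{2}r^{2}|q|^{\,n-2}\sum_{k=0}^{n-2}\Bigl(\tfrac{r}{|q|}\Bigr)^{k}.
\]
Since $|q|>r$, the finite geometric sum is bounded above by $\sum_{k=0}^{\infty}(r/|q|)^{k}=|q|/(|q|-r)$, so the right-hand side is at most $\alpha_{2}r^{2}|q|^{\,n-1}/(|q|-r)$; dividing by $|q|^{n-1}>0$ yields the clean estimate
\[
|q+a_{1}| \;\le\; \frac{\alpha_{2}r^{2}}{|q|-r}.
\]

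To conclude I would split on the size of $|q|$ once more. If $r<|q|\le r(1+\alpha_{2})$, then $q$ lies in the first ball. If instead $|q|>r(1+\alpha_{2})$, then $|q|-r>\alpha_{2}r$, and the displayed bound gives $|q+a_{1}|< \alpha_{2}r^{2}/(\alpha_{2}r)=r$, so $q$ lies in the second ball. Combining all the cases, every zero of $f$ lies in the union of the two balls.

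I do not expect a serious obstacle here; the proof is short. The only points that need care are (i) verifying that the factorization and the norm manipulations are legitimate over $\mathbb{H}$ — they are, precisely because powers of $q$ commute and the norm is multiplicative — and (ii) arranging the case analysis on $|q|$ so that the geometric-series step is invoked only in its valid range $|q|>r$, with the borderline values absorbed into the first ball via $\alpha_{2}>0$. One could instead route the same computation through a Gershgorin-type bound on the right eigenvalues of the companion matrix $C_{f}$, but the direct polynomial argument is shorter and makes the role of the monotonicity hypothesis $\alpha_{2}\ge\alpha_{3}\ge\cdots\ge\alpha_{n}$ entirely transparent.
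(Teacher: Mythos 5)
Your argument is correct, and it is a genuinely different route from the one in the paper (and in the cited source of this theorem). The paper's machinery throughout is matrix-theoretic: form the companion matrix $C_f$, conjugate by a diagonal matrix $D=\mathrm{diag}(d_1,\dots,d_n)$ with positive real entries so that the left eigenvalues are preserved and coincide with the zeros of $f$ (Lemma~\ref{L2}), and then apply the Ger\v{s}gorin-ball lemma for left eigenvalues (Lemma~\ref{L1}); the two balls in the conclusion arise from the first $n-1$ rows and the last row respectively. You instead work directly with the evaluation $f(q)=0$, using only that $q^{n-1}(q+a_1)=q^n+q^{n-1}a_1$ (associativity and left distributivity, valid in $\mathbb{H}$ with the coefficients written on the right exactly as in the statement) and that the quaternion norm is multiplicative. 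Your estimate $|q+a_1|\le \alpha_2 r^2/(|q|-r)$ for $|q|>r$, followed by the dichotomy on whether $|q|$ exceeds $r(1+\alpha_2)$, delivers the same two balls. Each approach buys something: the matrix method is systematic and generalizes mechanically to other coefficient patterns (it is what powers Lemma~\ref{L3} and Theorem~\ref{tA} here), while your direct argument is shorter, avoids the eigenvalue apparatus entirely, and makes visible that the full monotonicity hypothesis $\alpha_2\ge\alpha_3\ge\cdots\ge\alpha_n$ is stronger than needed --- you only ever use $\alpha_\nu\le\alpha_2$ for $\nu\ge 2$. The case analysis is airtight: the borderline values $|q|\le r$ are absorbed into the first (closed) ball, and the geometric-series bound is invoked only when $|q|>r$.
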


\section{\textbf{Main Results}}
We begin with the following result which is a significant refinement of Theorem {D}.
\begin{theorem}\label{tA}
If  $f(q) = q^n + q^{p}a_{p} + q^{p-1}a_{p-1} + \cdots + qa_{1}+a_{0}$, $0 \leq p \leq {n-1}$ is a monic quaternion polynomial of degree $n$ with quaternion coefficients and q be a quaternion variables, and $\left|a_{\nu}\right| \leq M$, $ \quad \nu = 0, 1,\cdots, {n-p}$, then all the zeros of $f(q)$ lie in the ball
 
 \begin{equation}\label{d2}
 \left|q\right| \leq \left\lbrace\left(1 + M\right)^{p+1} - 1\right\rbrace^{\frac{1}{n}}.
 \end{equation}
\end{theorem}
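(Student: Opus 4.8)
The plan is to work directly from the relation satisfied by a zero, retaining the sharper geometric-series bound that the classical Cauchy argument discards at its last step. Let $q_{0}\in\mathbb{H}$ be an arbitrary zero of $f$. If $q_{0}=0$ there is nothing to prove, since the right-hand side of \eqref{d2} is non-negative; so suppose $q_{0}\neq 0$. From $f(q_{0})=0$ we get $q_{0}^{\,n}=-\bigl(q_{0}^{\,p}a_{p}+q_{0}^{\,p-1}a_{p-1}+\cdots+q_{0}a_{1}+a_{0}\bigr)$, and because the quaternionic norm is multiplicative ($|q_{0}^{\,\nu}a_{\nu}|=|q_{0}|^{\nu}|a_{\nu}|$), the triangle inequality gives
\begin{equation}\label{plan1}
|q_{0}|^{n}\;\le\;\sum_{\nu=0}^{p}|q_{0}|^{\nu}\,|a_{\nu}|\;\le\;M\sum_{\nu=0}^{p}|q_{0}|^{\nu}.
\end{equation}

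Next I would record a crude a priori bound for $|q_{0}|$. Viewing the absent coefficients $a_{p+1},\dots,a_{n-1}$ as $0$ (each of modulus $\le M$), Theorem \ref{L1} applies and yields $|q_{0}|\le 1+M$. This can also be extracted from \eqref{plan1} itself without invoking Theorem \ref{L1}: if $|q_{0}|>1+M$ then $\sum_{\nu=0}^{p}|q_{0}|^{\nu}=\frac{|q_{0}|^{p+1}-1}{|q_{0}|-1}<\frac{|q_{0}|^{n}}{|q_{0}|-1}<\frac{|q_{0}|^{n}}{M}$, using $p+1\le n$ and $|q_{0}|>1$, which contradicts \eqref{plan1}.

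The refinement now comes from not throwing this bound away: since $t\mapsto\sum_{\nu=0}^{p}t^{\nu}$ is non-decreasing on $(0,\infty)$ and $|q_{0}|\le 1+M$, \eqref{plan1} gives
\begin{equation}\label{plan2}
|q_{0}|^{n}\;\le\;M\sum_{\nu=0}^{p}(1+M)^{\nu}\;=\;M\cdot\frac{(1+M)^{p+1}-1}{(1+M)-1}\;=\;(1+M)^{p+1}-1,
\end{equation}
where the geometric-series evaluation is legitimate because $M>0$; the degenerate case $M=0$ forces $f(q)=q^{n}$, whose only zero is $0$, and then \eqref{d2} reads $|q|\le 0$ and holds trivially. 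Taking $n$-th roots in \eqref{plan2} produces exactly \eqref{d2}.

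I do not expect a genuine obstacle here; the argument is short. The points needing care are: (i) using multiplicativity of the quaternionic norm so that \eqref{plan1} is valid despite non-commutativity of $\mathbb{H}$; (ii) establishing the a priori bound $|q_{0}|\le 1+M$, which is precisely what licenses replacing $|q_{0}|$ by $1+M$ inside the geometric sum — this is the step where the estimate gains over the naive one; and (iii) disposing of the boundary cases $q_{0}=0$ and $M=0$. It is also worth remarking at the close that \eqref{d2} genuinely refines Theorem \ref{L1}: since $p+1\le n$ we have $(1+M)^{p+1}-1\le(1+M)^{n}$, hence $\bigl\{(1+M)^{p+1}-1\bigr\}^{1/n}\le 1+M$.
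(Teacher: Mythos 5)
Your proof is correct, and it takes a genuinely different route from the paper's. The paper derives Theorem~\ref{tA} from Lemma~\ref{L3}, which localizes the left eigenvalues of the companion matrix of $f$ via a diagonal similarity together with Ger\v{s}gorin balls, and then makes the specific choice $\delta_{k}=\bigl[(1+M)^{n}/\{(1+M)^{p+1}-1\}\bigr]\bigl[a_{p-k+1}/(1+M)^{n-p+k-1}\bigr]$, verifying $\sum_{k}|\delta_{k}|\le 1$ by summing exactly the geometric series that appears in your computation. You instead work directly with the identity $q_{0}^{\,n}=-(q_{0}^{\,p}a_{p}+\cdots+a_{0})$ satisfied by a zero, use multiplicativity of the quaternionic norm and the triangle inequality to get $|q_{0}|^{n}\le M\sum_{\nu=0}^{p}|q_{0}|^{\nu}$, and then feed the a priori Cauchy bound $|q_{0}|\le 1+M$ (Theorem~D, for which you also supply a self-contained derivation) back into the geometric sum. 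This is shorter, entirely elementary, avoids the eigenvalue and companion-matrix machinery, makes transparent where the gain over $1+M$ comes from, and you correctly dispose of the degenerate cases $q_{0}=0$ and $M=0$ (the latter being needed to divide by $M$ in the geometric-series evaluation). What the paper's route buys is Lemma~\ref{L3} itself, a one-parameter family of bounds of independent interest within the authors' matrix-method framework; what your route buys is brevity and self-containment. Both arguments hinge on the same geometric-series identity and arrive at the identical radius, and your closing observation that $\{(1+M)^{p+1}-1\}^{1/n}\le 1+M$ correctly justifies the word ``refinement.''
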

If in Theorem \ref{tA}, we take $p = {n-1}$, we get following result.
  \begin{corollary}\label{c1}
Let $f(q) = q^n + q^{n-1}a_{n-1} + q^{n-2}a_{n-2} + \cdots + qa_{1} + a_{0}$ be a monic quaternion polynomial of degree n with quaternion coefficients and q be a quaternion variable, and $\left|a_{\nu}\right| \leq M, \quad \nu = 0, 1, \cdots, {n-1},$ then all the zeros of $f(q)$ lie in the ball
\begin{equation}\label{d1}
 \left|q\right| \leq\left\lbrace\left(1 + M\right)^{n} - 1\right\rbrace^{\frac{1}{n}}.
\end{equation}
  \end{corollary}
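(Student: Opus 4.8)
The plan is to argue by contrapositive using nothing more than the multiplicativity and subadditivity of the modulus on $\mathbb{H}$, together with one elementary monotonicity fact. Write $R=\bigl\{(1+M)^{p+1}-1\bigr\}^{1/n}$ for the claimed radius and let $q_0$ be any zero of $f$. Since $f$ is evaluated by substituting $q_0$ in the prescribed order, $f(q_0)=0$ yields $q_0^{\,n}=-(q_0^{\,p}a_p+q_0^{\,p-1}a_{p-1}+\cdots+a_0)$, and taking moduli gives
\[
|q_0|^{\,n}\;\le\;\sum_{\nu=0}^{p}|a_\nu|\,|q_0|^{\nu}\;\le\;M\sum_{\nu=0}^{p}|q_0|^{\nu}.
\]
Setting $t=|q_0|$, it therefore suffices to prove the purely scalar implication: if $t\ge 0$ satisfies $t^{\,n}\le M\sum_{\nu=0}^{p}t^{\nu}$, then $t^{\,n}\le (1+M)^{p+1}-1=R^{\,n}$, since $x\mapsto x^{1/n}$ is increasing on $[0,\infty)$.

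The auxiliary fact I would isolate first is that
\[
\psi(x)\;=\;\frac{(1+x)^{p+1}-1}{x}\;=\;\sum_{j=0}^{p}\binom{p+1}{j+1}x^{j}
\]
is a polynomial with non-negative coefficients, hence non-decreasing on $[0,\infty)$, and that $M\psi(M)=(1+M)^{p+1}-1=R^{\,n}$, while $M\psi(0)=M(p+1)$. Granting this, I would split according to the size of $t$. If $t\le 1$, then $\sum_{\nu=0}^{p}t^{\nu}\le p+1$, so $t^{\,n}\le M(p+1)=M\psi(0)\le M\psi(M)=R^{\,n}$ (equivalently, $M(p+1)\le(1+M)^{p+1}-1$ by the binomial theorem), giving $t\le R$. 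If $t>1$, then $\sum_{\nu=0}^{p}t^{\nu}=\tfrac{t^{p+1}-1}{t-1}=\psi(t-1)$, so the hypothesis reads $t^{\,n}\le M\psi(t-1)$. When $t-1\le M$, monotonicity of $\psi$ gives $M\psi(t-1)\le M\psi(M)=R^{\,n}$, so $t^{\,n}\le R^{\,n}$. When $t-1>M$, one estimates instead $M\psi(t-1)<(t-1)\psi(t-1)=t^{p+1}-1<t^{p+1}\le t^{\,n}$, using $p+1\le n$ and $t>1$; this forces $t^{\,n}<t^{\,n}$, a contradiction, so this last subcase does not arise for a zero. In every admissible case $|q_0|=t\le R$, which is the assertion, and Corollary \ref{c1} is the specialization $p=n-1$.

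The substance of the argument is the monotonicity of $\psi$ and the identity $M\psi(M)=(1+M)^{p+1}-1$; there is essentially no analytic obstacle once the problem has been reduced to the one scalar inequality above. The one point that needs care is that $R$ can be smaller than $1$ when $M$ is small, so the region $|q|\le 1$ cannot be brushed aside and must be handled on the same footing as $|q|>1$ — which is exactly what the case $t\le 1$ does. I would also note in passing that the conclusion can alternatively be extracted from the companion matrix $C_f$ by bounding its right eigenvalues, but the direct modulus estimate above is shorter and entirely self-contained.
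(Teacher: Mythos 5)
Your proof is correct, but it follows a genuinely different route from the paper. The paper derives Corollary \ref{c1} from Theorem \ref{tA}, which in turn rests on the companion matrix $C_f$, a diagonal similarity $P^{-1}C_fP$, and a Ger\v{s}gorin-type localization of its left eigenvalues (Lemmas \ref{L1}--\ref{L3}), finished off by a specific choice of the weights $\delta_k$ and a geometric-series computation. You instead take a zero $q_0$, use only the multiplicativity and subadditivity of the quaternionic norm to get $|q_0|^n\le M\sum_{\nu=0}^{p}|q_0|^{\nu}$, and then settle the resulting scalar inequality by the monotonicity of $\psi(x)=\bigl((1+x)^{p+1}-1\bigr)/x$ together with the identity $M\psi(M)=(1+M)^{p+1}-1$; I checked the three cases $t\le 1$, $1<t\le 1+M$, $t>1+M$ and each is sound (in the last one $\psi(t-1)>0$ and $p+1\le n$ give the needed contradiction). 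Your argument is shorter, entirely self-contained, and in fact proves the sharper statement that every zero satisfies $|q_0|\le t_0$, where $t_0$ is the positive root of $t^n=M\sum_{\nu=0}^{p}t^{\nu}$, with $R=\{(1+M)^{p+1}-1\}^{1/n}$ recovered as an explicit upper bound for $t_0$; it also sidesteps the eigenvalue machinery altogether, which matters here because left eigenvalues of quaternionic matrices are delicate objects. What the paper's route buys in exchange is reusability: Lemma \ref{L3} is a general parametrized Cauchy-type bound that also feeds the other results of the paper, and the matrix formulation connects the zero-location problem to the eigenvalue literature. One presentational remark: you prove the general Theorem \ref{tA} and then specialize to $p=n-1$; that is legitimate, but you should say explicitly at the outset that you are proving the stronger statement.
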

The following corollary is an immediate consequence of Theorem \ref{tA}.
\begin{corollary}\label{c2}
If  $f(q) = q^n + q^{p}a_{p} + q^{p-1}a_{p-1} + \cdots + qa_{1}+a_{0}$, $0 \leq p \leq {n-1}$ is a quaternion polynomial of degree $n$ with quaternion coefficients, then $ \quad \left|q\right| \leq \left(1 + M\right)^{\frac{p+1}{n}}$. 
\end{corollary}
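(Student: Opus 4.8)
\textbf{Proof proposal (Corollary \ref{c2}).} The quickest route is to quote Theorem \ref{tA}: every zero $q$ of $f$ satisfies $|q|\le\{(1+M)^{p+1}-1\}^{1/n}$, and since $(1+M)^{p+1}-1\le(1+M)^{p+1}$ this yields $|q|\le(1+M)^{(p+1)/n}$ immediately. So all the real content sits in Theorem \ref{tA}; the plan below is a self-contained argument proving both, via a two-step bootstrap on the modulus of a zero that uses only multiplicativity of the quaternionic norm and the triangle inequality.

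Let $q$ be a zero of $f$ and put $t=|q|$, which we may assume positive. From $f(q)=0$ we have $q^{n}=-(q^{p}a_{p}+q^{p-1}a_{p-1}+\cdots+qa_{1}+a_{0})$; taking moduli and using $|xy|=|x||y|$, the triangle inequality, and $|a_{j}|\le M$ for $0\le j\le p$ gives the basic inequality $t^{n}\le M(1+t+t^{2}+\cdots+t^{p})$. I first extract the coarse bound $t\le 1+M$: this is clear for $t\le 1$, while if $t>1$ and, toward a contradiction, $t>1+M$, then $M<t-1$ and
\[
t^{n}\;\le\;M\cdot\frac{t^{p+1}-1}{t-1}\;<\;(t-1)\cdot\frac{t^{p+1}-1}{t-1}\;=\;t^{p+1}-1\;<\;t^{p+1}\;\le\;t^{n},
\]
the last step using $p+1\le n$ and $t>1$ — absurd. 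Hence $t\le 1+M$, so $t^{j}\le(1+M)^{j}$ for every $j$, and feeding this back into the basic inequality,
\[
t^{n}\;\le\;M\sum_{j=0}^{p}t^{j}\;\le\;M\sum_{j=0}^{p}(1+M)^{j}\;=\;(1+M)^{p+1}-1\;\le\;(1+M)^{p+1}.
\]
Therefore $|q|=t\le(1+M)^{(p+1)/n}$, which is the assertion of Corollary \ref{c2} (and, keeping the sharper middle value, of Theorem \ref{tA}).

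The only genuinely delicate point, and thus the ``main obstacle'', is the bootstrap logic: the termwise comparison $t^{j}\le(1+M)^{j}$ becomes legitimate only once the coarse bound $t\le 1+M$ is in hand, and applying the geometric-series identity to $\sum_{j=0}^{p}t^{j}$ without that preliminary step merely reproduces the basic inequality. It is also worth noting that the hypothesis is used precisely as $|a_{j}|\le M$ for the indices $0\le j\le p$ that actually occur in $f$, and that the companion-matrix / eigenvalue apparatus of the introduction — although it affords an alternative proof through right-eigenvalue estimates for $C_{f}$ — plays no role in this argument.
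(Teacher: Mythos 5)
Your proposal is correct. The first line of it — quote Theorem \ref{tA} and use $(1+M)^{p+1}-1\le(1+M)^{p+1}$ — is precisely what the paper intends, since it offers no separate proof and calls the corollary an immediate consequence of Theorem \ref{tA} (note that the corollary's statement silently inherits the hypothesis $\left|a_{j}\right|\le M$ from that theorem, which you correctly make explicit). Your self-contained bootstrap argument, however, is a genuinely different and noticeably more elementary route: the paper reaches Theorem \ref{tA} through the companion matrix, the Ger\v{s}gorin-type localization of left eigenvalues (Lemma \ref{L1}), the similarity-invariance of the zero set under diagonal conjugation (Lemma \ref{L2}), and the weighted choice of $\delta_{k}$ in Lemma \ref{L3}, whereas you only use multiplicativity of the quaternionic norm, the triangle inequality applied to $q^{n}=-(q^{p}a_{p}+\cdots+a_{0})$, and a two-step bootstrap (coarse bound $t\le 1+M$ by contradiction, then termwise comparison in the geometric sum). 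Your case analysis is complete ($t\le 1$, $t>1+M$ by contradiction, the remaining case being vacuously fine), and you even recover the sharper radius $\{(1+M)^{p+1}-1\}^{1/n}$ of Theorem \ref{tA} itself. What the paper's machinery buys is reusability — Lemma \ref{L3} with general weights $\delta_{k}$ yields a family of bounds beyond this particular choice — while your argument buys transparency and avoids any appeal to eigenvalue theory for quaternionic matrices.
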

\begin{remark}\label{r1} 
If $p = n-1$, then Corollary \ref{c1} reduces to Theorem \ref{od}.  
 \end{remark}
 
 \begin{corollary}\label{c3}
 If  $f(q) = q^n + q^{p}a_{p} + q^{p-1}a_{p-1} + \cdots + qa_{1}+a_{0}$, $0 \leq p \leq {n-1}$ is a quaternion polynomial of degree $n$ with quaternion coefficients such that $\left|a_{j}\right| \leq 1$, $j = 0, 1, \cdots, p$, then all the zeros of $f(q)$ lie in the ball $\quad \left|q\right| \leq 2^{\frac{p+1}{n}}$.
 \end{corollary}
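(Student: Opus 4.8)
\emph{Proof proposal.} The plan is to deduce this directly from Corollary~\ref{c2} (equivalently, from Theorem~\ref{tA}), since the hypothesis $|a_j|\le 1$ for $j=0,1,\dots,p$ is precisely the case $M=1$ of the coefficient bound $|a_\nu|\le M$ appearing there. Concretely, first I would note that under the present hypothesis the quantity $M=\max_{0\le j\le p}|a_j|$ satisfies $M\le 1$. Next, I would invoke Corollary~\ref{c2}, which asserts that all zeros of $f$ lie in $|q|\le(1+M)^{(p+1)/n}$.

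The only point requiring a word of care is monotonicity: the radius $(1+M)^{(p+1)/n}$ — and, upstream, the radius $\bigl\{(1+M)^{p+1}-1\bigr\}^{1/n}$ of Theorem~\ref{tA} — is an increasing function of $M$ on $[0,\infty)$, because $t\mapsto(1+t)^{(p+1)/n}$ is increasing as $(p+1)/n>0$. Hence replacing $M$ by the larger value $1$ only enlarges the ball, so every zero of $f$ still satisfies $|q|\le(1+1)^{(p+1)/n}=2^{(p+1)/n}$, which is the claim.

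There is essentially no obstacle here; the corollary is a straightforward specialization of an already-established result. If one preferred to argue straight from Theorem~\ref{tA} rather than citing Corollary~\ref{c2}, the only additional ingredient is the elementary inequality $\bigl\{(1+M)^{p+1}-1\bigr\}^{1/n}<\bigl\{(1+M)^{p+1}\bigr\}^{1/n}=(1+M)^{(p+1)/n}$, which holds because $x\mapsto x^{1/n}$ is increasing on $[0,\infty)$; combining this with $M\le 1$ again yields $|q|\le 2^{(p+1)/n}$, completing the proof.
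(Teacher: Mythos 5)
Your proposal is correct and matches the paper's (implicit) derivation: Corollary~\ref{c3} is exactly the case $M=1$ of Corollary~\ref{c2} (itself obtained from Theorem~\ref{tA} via $\{(1+M)^{p+1}-1\}^{1/n}\le(1+M)^{(p+1)/n}$), and your monotonicity remark handles the harmless passage from $\max_j|a_j|\le 1$ to $M=1$. Nothing further is needed.
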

 From Corollary $\ref{c2}$, we can easily deduce the following:
 \begin{corollary}\label{c4}
If  $f(q) = q^n + q^{p}a_{p} + q^{p-1}a_{p-1} + \cdots + qa_{1}+a_{0}$, $0 \leq p \leq {n-1}$ is a quaternion polynomial of degree $n$ with quaternion coefficients such that $\left|a_{j}\right| \leq 1$, $j = 0, 1, \cdots, p$, then all the zeros of $f(q)$ lie in the ball $\quad \left|q\right| \leq \left(2^{n} - 1\right)^{\frac{1}{n}}$.
\end{corollary}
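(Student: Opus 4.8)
The plan is to read off Corollary \ref{c4} as a direct specialization of Theorem \ref{tA}, the only extra ingredient being the monotonicity of the $n$-th root. First I would observe that the hypothesis $|a_j|\le 1$ for $j=0,1,\dots,p$ is nothing but the coefficient hypothesis of Theorem \ref{tA} with the uniform bound $M=1$; no further structure of $f$ is needed, so that theorem applies verbatim.

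Applying Theorem \ref{tA} with $M=1$, every zero $q$ of $f$ satisfies
\[ |q|\le\bigl\{(1+1)^{p+1}-1\bigr\}^{1/n}=\bigl(2^{p+1}-1\bigr)^{1/n}. \]
It then remains only to remove the dependence on $p$ and replace this quantity by the stated $(2^{n}-1)^{1/n}$. Since $0\le p\le n-1$ we have $p+1\le n$, hence $2^{p+1}\le 2^{n}$ and therefore $0\le 2^{p+1}-1\le 2^{n}-1$; because $t\mapsto t^{1/n}$ is (strictly) increasing on $[0,\infty)$, this gives $\bigl(2^{p+1}-1\bigr)^{1/n}\le\bigl(2^{n}-1\bigr)^{1/n}$. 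Consequently all zeros of $f$ lie in the ball $|q|\le\bigl(2^{n}-1\bigr)^{1/n}$, which is the assertion of Corollary \ref{c4}.

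An equivalent and perhaps more transparent route runs through Corollary \ref{c1}: I would regard $f$ as a monic polynomial of degree $n$ of the form $q^{n}+q^{n-1}a_{n-1}+\cdots+qa_{1}+a_{0}$ whose coefficients $a_{p+1},\dots,a_{n-1}$ happen to vanish. Then $|a_{\nu}|\le 1$ for every $\nu=0,1,\dots,n-1$, so Corollary \ref{c1} applies with $M=1$ and delivers $|q|\le\bigl(2^{n}-1\bigr)^{1/n}$ immediately, without the need for the comparison step.

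I do not anticipate any genuine obstacle here, since the statement is a corollary in the strict sense; the only points demanding (a moment's) attention are the bookkeeping with the indices $a_{0},\dots,a_{p}$ and the elementary inequality $2^{p+1}-1\le 2^{n}-1$ together with the non-negativity of its left-hand side, which is precisely what legitimizes passing to $n$-th roots.
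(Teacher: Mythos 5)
Your proposal is correct, and it is the natural (indeed, essentially the only sensible) way to prove the statement: specialize Theorem \ref{tA} to $M=1$, obtaining $\bigl(2^{p+1}-1\bigr)^{1/n}$, and then use $p+1\le n$ together with the monotonicity of $t\mapsto t^{1/n}$ to pass to $\bigl(2^{n}-1\bigr)^{1/n}$. Your second route, padding $f$ with the zero coefficients $a_{p+1}=\cdots=a_{n-1}=0$ and invoking Corollary \ref{c1} with $M=1$, is equally valid and gives the bound in one step. The paper itself supplies no written proof; it merely asserts that the corollary follows ``from Corollary \ref{c2}.'' That attribution is actually the weaker route: Corollary \ref{c2} with $M=1$ yields $|q|\le 2^{(p+1)/n}=\bigl(2^{p+1}\bigr)^{1/n}$, and the inequality $2^{p+1}\le 2^{n}-1$ needed to pass from this to the stated bound fails precisely when $p=n-1$ (where $2^{n}>2^{n}-1$). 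So your derivation directly from Theorem \ref{tA}, which produces $2^{p+1}-1$ rather than $2^{p+1}$ before the comparison, is the one that works uniformly in $0\le p\le n-1$, and is the argument the authors presumably intended. No gaps.
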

As an application to Corollary \ref{c4}, we now present the following result regarding location of zeros of a quaternion polynomial.
  \begin{theorem}\label{tB} 
  Let $f(q) = q^n + q^{n-1}a_{n-1} + q^{n-2}a_{n-2} + \cdots + qa_1 + a_0$ be a monic quaternion polynomial with quaternion coefficients and $\left|f(q)\right|$ attains maximum on $\left|q\right| = t$ at the point $q = te^{t\alpha}$ where $t\in R$, then $f(q)$ does not vanish in the ball
 \begin{equation}\label{d3}
 \left|q - te^{i\alpha}\right| < \frac{t}{n\left(2^{n} - 1 \right)^{\frac{1}{n}}}.
  \end{equation}
  \end{theorem}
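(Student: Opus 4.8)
The plan is to derive Theorem~\ref{tB} from Corollary~\ref{c4} by a reciprocal--polynomial argument carried out at the extremal point. Set $q_0=te^{i\alpha}$ and $M=\max_{|q|=t}|f(q)|$. The maximum modulus principle for quaternionic polynomials gives $|f(q_0)|=M$, and since $f$ is monic of degree $n$ one has $M\ge t^{\,n}>0$, so in particular $f(q_0)\neq0$. Assume, towards a contradiction, that $f(\zeta)=0$ for some $\zeta$ with $|\zeta-q_0|<\rho$, where $\rho:=t\big/\!\big(n(2^{n}-1)^{1/n}\big)$; then $\zeta\neq q_0$.

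First expand $f$ about $q_0$: write $g(w)=f(q_0+w)=w^{n}+b_{n-1}w^{n-1}+\cdots+b_1w+b_0$ (a monic polynomial of degree $n$ in $w$), with $b_0=f(q_0)$ and $b_n:=1$; its nonzero zeros are exactly the quaternions $\zeta'-q_0$ with $\zeta'$ a zero of $f$ other than $q_0$. Pass to the reversed polynomial $g^{\sharp}(w)=w^{n}g(1/w)=w^{n}b_0+w^{n-1}b_1+\cdots+w\,b_{n-1}+1$, whose nonzero zeros are the reciprocals of those of $g$, and then rescale by $\lambda:=n/t$ to obtain the monic polynomial
\[
H(v)=\lambda^{-n}\,g^{\sharp}(\lambda v)\,b_0^{-1},
\]
whose coefficient of $v^{\,n-k}$ equals $\lambda^{-k}b_k b_0^{-1}$ for $1\le k\le n$.

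Now estimate these coefficients. Applying the quaternionic Bernstein inequality $k$ times and using $|f(q_0)|=M$ yields $|b_k|\le\binom{n}{k}t^{-k}M$ for $1\le k\le n-1$, so that
\[
\bigl|\lambda^{-k}b_k b_0^{-1}\bigr|\le\binom{n}{k}t^{-k}\Bigl(\tfrac{t}{n}\Bigr)^{k}=\binom{n}{k}n^{-k}\le1,
\]
while for $k=n$, $\bigl|\lambda^{-n}b_n b_0^{-1}\bigr|=M^{-1}(t/n)^{n}\le t^{-n}(t/n)^{n}=n^{-n}\le1$ because $M\ge t^{\,n}$. Hence $H$ is a monic polynomial of degree $n$ all of whose remaining coefficients have modulus at most $1$, so Corollary~\ref{c4} (case $p=n-1$) gives that every zero $v$ of $H$ satisfies $|v|\le(2^{n}-1)^{1/n}$.

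Unwinding the substitutions, the zeros of $H$ are precisely the quaternions $t\big/\!\big(n(\zeta'-q_0)\big)$ as $\zeta'$ ranges over the zeros of $f$ distinct from $q_0$; taking $\zeta'=\zeta$ gives $t\big/\!\big(n|\zeta-q_0|\big)\le(2^{n}-1)^{1/n}$, i.e.\ $|\zeta-q_0|\ge\rho$, which contradicts the choice of $\zeta$. Therefore $f$ does not vanish in the ball \eqref{d3}. The step requiring the most care is the first one: since $(q_0+w)^{j}$ does not expand binomially when $q_0\notin\mathbb{R}$, the expansion of $f$ about $q_0$ and the subsequent reversal and rescaling must be justified in the appropriate quaternionic setting, and the iterated Bernstein inequality has to be available there; granting these, the rest is routine bookkeeping together with the direct appeal to Corollary~\ref{c4}.
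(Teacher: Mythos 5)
Your argument is essentially the paper's own proof: expand $f$ about the extremal point $te^{i\alpha}$, pass to the reversed and rescaled polynomial normalized by $f(te^{i\alpha})$, bound its coefficients via the iterated Bernstein inequality (Lemma \ref{L6}) using $|f(te^{i\alpha})|=\max_{|q|=t}|f(q)|$, and invoke Corollary \ref{c4} before unwinding the substitutions. The caveat you flag at the end --- that the expansion of $f$ about a non-real quaternion point, the reversal step, and the composition $f\bigl(\tfrac{t}{n}q+w\bigr)$ all require justification in the noncommutative setting --- is a genuine issue, but the paper's proof performs exactly the same formal manipulations without addressing it.
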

\section{\textbf{Lemma}}
For the proof of these theorems we need following lemmas.\\
\indent Lemma $\ref{L1}$ is due to Dar et al. \cite{RD}.
\begin{lemma}\label{L1}
All the left eigenvalues of a $n \times n$ matrix $A=(a_{\mu\nu})$ of quaternions lie  in the union of the n Ger\v{s}gorin balls defined by $B_{\mu}=\{q \in \mathbb{H}:|q-a_{\mu\mu}|\leq \rho_{\mu}(A) \},$ where $\rho_{\mu}(A)=\sum_{{\nu=1}\atop{\nu\neq\mu}}^n |a_{\mu\nu}|.$
\end{lemma}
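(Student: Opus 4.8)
The plan is to mimic the classical Geršgorin argument, taking care that the only place where quaternionic noncommutativity could interfere is handled correctly by the \emph{left}-eigenvalue hypothesis. First I would fix a left eigenvalue $\lambda\in\mathbb{H}$ with associated nonzero eigenvector $x=[x_1, x_2, \cdots, x_n]^T$, so that $Ax=\lambda x$. Writing out the $\mu$-th coordinate of this identity gives $\sum_{\nu=1}^n a_{\mu\nu}x_\nu=\lambda x_\mu$ for every $\mu$, and separating the diagonal term yields $(\lambda-a_{\mu\mu})x_\mu=\sum_{\nu\neq\mu}a_{\mu\nu}x_\nu$.

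The crucial observation is that this factorisation is legitimate despite noncommutativity: in $Ax=\lambda x$ the scalar $\lambda$ multiplies $x_\mu$ on the \emph{left}, exactly as $a_{\mu\mu}$ does, so left distributivity lets me collect $\lambda x_\mu-a_{\mu\mu}x_\mu=(\lambda-a_{\mu\mu})x_\mu$ with $x_\mu$ remaining on the right. This is precisely why the statement concerns left and not right eigenvalues; for the right-eigenvalue equation $Ax=x\lambda$ the term $x_\mu\lambda$ would sit on the opposite side from $a_{\mu\mu}x_\mu$, and no such clean factorisation would be available.

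Next I would choose the index $\mu$ for which $|x_\mu|=\max_{1\le\nu\le n}|x_\nu|$; since $x\neq 0$ we have $|x_\mu|>0$. Applying the triangle inequality together with the multiplicativity of the quaternion modulus $|pq|=|p|\,|q|$ to both sides of the factored identity gives
\begin{equation*}
|\lambda-a_{\mu\mu}|\,|x_\mu| = \left|\sum_{\nu\neq\mu}a_{\mu\nu}x_\nu\right| \le \sum_{\nu\neq\mu}|a_{\mu\nu}|\,|x_\nu| \le \left(\sum_{\nu\neq\mu}|a_{\mu\nu}|\right)|x_\mu| = \rho_\mu(A)\,|x_\mu|.
\end{equation*}
Dividing through by $|x_\mu|>0$ yields $|\lambda-a_{\mu\mu}|\le\rho_\mu(A)$, i.e.\ $\lambda\in B_\mu$, so $\lambda$ lies in the union $\bigcup_{\mu=1}^n B_\mu$. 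Since $\lambda$ was an arbitrary left eigenvalue, this proves the claim.

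There is essentially no computational obstacle here: the entire content is the bookkeeping around noncommutativity in the second paragraph, namely ensuring that $x_\mu$ can be factored out on a single side \emph{before} the modulus is taken. Once that is secured, the multiplicativity $|pq|=|p|\,|q|$ (which does hold in $\mathbb{H}$) makes the remaining estimate identical to the complex Geršgorin case. I would therefore expect the write-up to be short, with the only point worth emphasising being the left-versus-right distinction.
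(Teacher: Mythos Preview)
Your argument is correct and is exactly the standard Ger\v{s}gorin proof, with the one nontrivial point---that the left-eigenvalue hypothesis is what allows $(\lambda-a_{\mu\mu})x_\mu$ to be factored with $x_\mu$ on a single side---handled cleanly. Note, however, that the paper does not supply its own proof of this lemma: it simply attributes the result to Dar et al.\ \cite{RD} and states it without argument, so there is nothing in the paper to compare against beyond observing that your write-up is the expected one.
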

Lemma $\ref{L2}$ is due to Rather et al. \cite{NR}.
\begin{lemma}\label{L2}
Let $P(q)$ be a quaternion polynomial with quaternionic coefficients and $C_{p}$ be the companion matrix of $P(q)$, then for any diagonal matrix $D= diag(d_1, d_2, ..., d_{n-1}, d_n),$ where $d_1, d_2, ..., d_n$ are positive real numbers, the left eigenvalues of $D^{-1}C_P D$ and the zeros of $P(q)$ are same.
\end{lemma}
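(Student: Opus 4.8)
The plan is to establish the lemma in two independent steps: first, that the left eigenvalues of the companion matrix $C_P$ are exactly the zeros of $P(q)$, and second, that conjugating $C_P$ by a real positive diagonal matrix leaves the set of left eigenvalues unchanged. Chaining the two gives at once that the left eigenvalues of $D^{-1}C_P D$ coincide with the zeros of $P(q)$, which is the assertion.

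For the first step I would write $P(q)=q^n+q^{n-1}a_1+\cdots+qa_{n-1}+a_n$ and take $C_P=C_f$ to be the companion matrix displayed in the introduction. Taking a nonzero column $x=[x_1,\ldots,x_n]^T$, I would spell out the system $C_Px=\lambda x$ row by row. The subdiagonal pattern produces a downward recursion that expresses each coordinate as a left polynomial in $\lambda$ times $x_n$; explicitly one is led to $x_{n-k}=\bigl(\lambda^k+\lambda^{k-1}a_1+\cdots+\lambda a_{k-1}+a_k\bigr)x_n$ for $k=1,\ldots,n-1$. Substituting the resulting $x_1$ into the top-row relation $-a_nx_n=\lambda x_1$ collapses the entire system to $\bigl(\lambda^n+\lambda^{n-1}a_1+\cdots+\lambda a_{n-1}+a_n\bigr)x_n=0$, that is, $P(\lambda)x_n=0$. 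Because the recursion forces $x=0$ the moment $x_n=0$, any genuine eigenvector has $x_n\neq0$, and since $\mathbb{H}$ is a division ring, $P(\lambda)x_n=0$ with $x_n\neq0$ forces $P(\lambda)=0$. The converse is built into the same formulas: given a zero $\lambda$ of $P$, I would set $x_n=1$ and define the remaining coordinates by the displayed recursion, which yields a nonzero left eigenvector for $\lambda$. Hence the zeros of $P$ and the left eigenvalues of $C_P$ are the same set.

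For the second step the decisive feature is that $D=\mathrm{diag}(d_1,\ldots,d_n)$ has real, hence central, entries, so each $d_\mu$ and $d_\mu^{-1}$ commutes with every quaternion. If $C_Px=\lambda x$ with $x\neq0$, I would put $y=D^{-1}x\neq0$ and compute $D^{-1}C_PD\,y=D^{-1}C_Px=D^{-1}(\lambda x)=\lambda(D^{-1}x)=\lambda y$, where the third equality uses the centrality of the real entries of $D^{-1}$ to slide the scalar $\lambda$ past $D^{-1}$. Thus every left eigenvalue of $C_P$ is a left eigenvalue of $D^{-1}C_PD$, and the reverse inclusion follows by interchanging the roles of $D$ and $D^{-1}$. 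Combined with the first step, this delivers the lemma.

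The main obstacle is the non-commutative bookkeeping in the first step: I must keep the scalar $\lambda$ and the coefficients $a_\nu$ on their correct sides all the way through the recursion so that the terminal factorization genuinely isolates $P(\lambda)$ on the left of $x_n$, with no spurious reordering of factors, and I must appeal to the division-ring structure of $\mathbb{H}$ rather than to ordinary cancellation when passing from $P(\lambda)x_n=0$ to $P(\lambda)=0$. In the second step the only point requiring care is that the computation collapses without the realness of $D$; this is precisely why the hypothesis restricts $D$ to positive real diagonal matrices.
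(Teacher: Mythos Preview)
The paper does not supply its own proof of this lemma; it simply quotes the statement and attributes it to Rather et al.\ \cite{NR}. Your two-step argument is correct and is the standard one: the row-by-row reduction of $C_Px=\lambda x$ to $P(\lambda)x_n=0$ is carried out with the coefficients kept on the correct side, and the conjugation step goes through precisely because the real diagonal entries of $D$ lie in the centre of $\mathbb{H}$. There is nothing to compare against in the present paper, and your write-up would serve as a self-contained proof of the cited result.
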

\begin{lemma}\label{L3}
If $f(q) = q^n + q^{p}a_{p} + \cdots + qa_{1} + a_{0}$, $0 \leq p \leq {n-1}$ is a quaternion polynomial of degree $n$ and if $\delta_{1}, \delta_{2}, \cdots, \delta_{p+1}$ are $p+1$ non-zero quaternion numbers such that $\sum\limits_{k=1}^{p+1} \left|\delta_{k}\right| \leq 1$, then all the zeros of $f(q)$ lie in the ball $\quad \left|q\right| \leq R$, where $R = \left\lbrace\underset{1\leq k\leq {p+1}}{Max} \frac{\left|{a_{p-k+1}}\right|}{\left|\delta_{k}\right|}\right\rbrace^{\frac{1}{n-p+k-1}}$.
\end{lemma}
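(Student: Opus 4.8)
The plan is to argue directly, without passing through the companion matrix: I will show that $f(q)\neq 0$ for every quaternion $q$ with $|q|>R$, which at once forces every zero of $f$ into the closed ball $|q|\le R$. The only facts about $\mathbb{H}$ that enter are that the norm is multiplicative, $|q_1q_2|=|q_1|\,|q_2|$, and satisfies the triangle inequality; since no product is ever reordered, non-commutativity causes no difficulty.

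First I would write the polynomial as $f(q)=q^{n}+\sum_{k=1}^{p+1}q^{\,p-k+1}a_{p-k+1}$, so that the $k$-th lower-order term carries $q^{\,p-k+1}$ and the key identity $(n-p+k-1)+(p-k+1)=n$ holds. Fix $q$ with $|q|>R$. For each $k\in\{1,\dots,p+1\}$ the integer $n-p+k-1$ is positive (it is $\ge 1$ because $0\le p\le n-1$ and $k\ge 1$), so the definition of $R$ gives
\[
|q|^{\,n-p+k-1}\;>\;R^{\,n-p+k-1}\;\ge\;\frac{|a_{p-k+1}|}{|\delta_k|},
\]
whence $|a_{p-k+1}|<|\delta_k|\,|q|^{\,n-p+k-1}$; multiplying by $|q|^{\,p-k+1}$ and using the identity above gives $|a_{p-k+1}|\,|q|^{\,p-k+1}<|\delta_k|\,|q|^{\,n}$. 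Summing over $k=1,\dots,p+1$ and invoking $\sum_{k=1}^{p+1}|\delta_k|\le 1$ yields $\sum_{k=1}^{p+1}|a_{p-k+1}|\,|q|^{\,p-k+1}<|q|^{\,n}$.

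The argument then closes with the triangle inequality,
\[
|f(q)|\;\ge\;|q|^{\,n}-\sum_{k=1}^{p+1}|q|^{\,p-k+1}|a_{p-k+1}|\;>\;|q|^{\,n}\Bigl(1-\sum_{k=1}^{p+1}|\delta_k|\Bigr)\;\ge\;0,
\]
so $f(q)\neq 0$ for $|q|>R$, which is the assertion (the degenerate case in which all $a_{p-k+1}$ vanish, so $R=0$ and $f(q)=q^{n}$, is covered by the same computation). I do not anticipate a real obstacle: this is a weighted Cauchy estimate, and the only delicate point is the index bookkeeping, namely checking that $a_{p-k+1}$ multiplies $q^{\,p-k+1}$ and that $1/(n-p+k-1)$ is exactly the exponent needed for the factor $|q|^{\,p-k+1}$ to cancel against $|q|^{\,n}$ in the displayed inequality. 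Alternatively one could keep to the paper's framework and use Lemmas \ref{L1} and \ref{L2}, identifying the zeros of $f$ with the left eigenvalues of $D^{-1}C_fD$ for a positive diagonal $D=\mathrm{diag}(d_1,\dots,d_n)$ of the form $d_\mu=c_\mu R^{\,n-\mu}$ with the weights $c_\mu$ built from the partial sums of the $|\delta_k|$, so that all $n$ Ger\v{s}gorin balls are contained in $|q|\le R$; but the direct estimate above is shorter and cleaner.
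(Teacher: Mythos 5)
Your argument is correct, but it takes a genuinely different route from the paper. The paper stays inside its matrix framework: it forms the companion matrix $C_f$ of $f$, conjugates by a diagonal matrix $P=\mathrm{diag}(r^{-(n-1)},\dots,r^{-1},1)$, applies the Ger\v{s}gorin-type bound for left eigenvalues (Lemma \ref{L1}) to get $|q|\le \max\bigl\{r,\sum_k |a_{n-k}|/r^{k-1}\bigr\}$, then chooses $r=\max_k\{|a_{n-k}|/|\delta_k|\}^{1/k}$ so that the hypothesis $\sum_k|\delta_k|\le 1$ forces the second quantity below $r$, and finally invokes Lemma \ref{L2} to transfer the statement from left eigenvalues back to zeros. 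You instead prove the contrapositive directly: for $|q|>R$ the definition of $R$ gives $|a_{p-k+1}|\,|q|^{p-k+1}<|\delta_k|\,|q|^{n}$ for each $k$ (using the exponent identity $(n-p+k-1)+(p-k+1)=n$), and summing and applying the triangle inequality together with the multiplicativity of the quaternion norm yields $|f(q)|>0$. Your index bookkeeping is right, the strictness of the inequalities is handled correctly (including the degenerate case $R=0$), and since the powers of $q$ sit on one side of the coefficients, evaluation and the norm estimates are unambiguous, so non-commutativity indeed never intervenes. What each approach buys: your estimate is shorter, more elementary, and matches the sparse form of the polynomial in the statement exactly, whereas the paper's written proof is carried out with the dense indexing $a_{n-1},a_{n-2},\dots$ and only implicitly specializes to the sparse case; on the other hand, the paper's route produces the intermediate Ger\v{s}gorin-type bound $|q|\le\max\{r,\sum_k|a_{n-k}|/r^{k-1}\}$ valid for every $r>0$, which is of independent interest and is the engine behind the other results of that line of work. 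Your sketched alternative with a diagonal similarity $D$ tuned to the $|\delta_k|$ is essentially the paper's method, so you have correctly identified both options; the direct estimate is the cleaner of the two for this particular lemma.
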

\begin{proof}[\bf{ Proof of Lemma \ref{L3}}] 
The companion matrix for the polynomial $f(q) = q^n + q^{p}a_{p} + \cdots + qa_{1} + a_{0}$, $0 \leq p \leq {n-1}$ is given by
$$C_f=\begin{bmatrix}
0& 0& 0& \cdots & 0& -a_{0}\\
1& 0& 0& \cdots & 0& -a_{1}\\
0& 1& 0& \cdots & 0& -a_{2}\\

\vdots\\
0& 0& 0& \cdots& 1& -a_{p}\\
\vdots\\
0& 0& 0& \cdots& 1& 0\\ 
\end{bmatrix}.$$ 
We take matrix $P = diag\left(\frac{1}{r^{n-1}}, \frac{1}{r^{n-2}},\cdots, \frac{1}{r}, 1\right)$, where $r$ is a positive real number and form the matrix
$$P^{-1}C_{f} P =\begin{bmatrix}
0& 0& 0& \cdots & 0& -\frac{a_{0}}{r^{n-1}}\\
r& 0& 0& \cdots & 0& -\frac{a_{1}}{r^{n-2}}\\
0& r& 0& \cdots & 0& -\frac{a_{2}}{r^{n-3}}\\

\vdots\\
0& 0& 0& \cdots& 0& -\frac{a_{p}}{r^{n-p-1}}\\
\vdots\\
0& 0& 0& \cdots& r& 0\\ 
\end{bmatrix}.$$ 
Applying Lemma \ref{L1} to the matrix $P^{-1}C_{f} P$, it follows that all the left eigenvalues of $P^{-1}C_{f} P$ lie in the union of balls \quad $\left|q\right| \leq t$ and \quad $\left|q + a_{n-1}\right| \leq \frac{|a_{0}|}{r^{n-1}} + \frac{|a_{1}|}{r^{n-2}} + \cdots + \frac{|a_{n-2}|}{r} + \left|a_{n-1}\right|$.\\
Since
\begin{equation}\nonumber
\begin{split}
|q|=|q+a_{n-1}-a_{n-1}| &
\leq |q+a_{n-1}|+|a_{n-1}|\\
&\leq \frac{a_{0}}{r^{n-1}} + \frac{a_{1}}{r^{n-2}} + \cdots + \frac{a_{n-2}}{r} + \left|a_{n-1}\right|\\
&= \displaystyle\sum_{k=1}^{n}\frac{|a_{n-k}|}{r^{k-1}}.
\end{split}
\end{equation}
That is, all the left eigenvalues of the matrix $T^{-1}C_{f}T$ lie in the ball
\begin{align}
|q|\leq max \Big\{r,\displaystyle\sum_{k=1}^{n}\frac{|a_{n-k}|}{r^{k-1}}\Big\}.
\end{align}
We now choose
\begin{align*}
r=max\Big\{\frac{|a_{n-k}|}{|\delta_{k}|}\Big\}^{1/k}, \quad k=1,2,\cdots,n.
\end{align*}
Then 
\begin{align*}
\frac{|a_{n-k}|}{|\delta_{k}|} \leq r^k \delta_{k}, \quad k=1,2,\cdots,n,
\end{align*}
which gives 
\begin{align*}
\frac{|a_{n-k}|}{r^{k-1}} \leq r|\delta_{k}|,
\end{align*}
so that
\begin{align*}
\displaystyle\sum_{k=1}^{n}\frac{|a_{n-k}|}{r^{k-1}}\leq\displaystyle\sum_{k=1}^{n}r|\delta_k|=t\displaystyle\sum_{k=1}^{n}|\delta_k|\leq r.
\end{align*}
Using this in $(5)$, it follows that all the left eigenvalues of the matrix $T^{-1}C_{f}T$ lie in
\begin{align}
\left|q\right| \leq \max\limits_{1 \leq k \leq n}\left\lbrace\frac{\left|a_{n-k}\right|}{\left|\delta_{k}\right|}\right\rbrace^{\frac{1}{k}}.
\end{align}
Since T is a diagonal matrix with real positive entries, by Lemma $2$, it follows that the left eigenvalues of $T^{-1}C_{f}T$ are the zeros of $f(q)$. Therefore, all the zeros of $f(q)$ lie in the ball given by $(6)$.\\
This completes the proof of Lemma $\ref{L3}$..
\end{proof}
Lemma \ref{L4} is due to Zhenghua \cite{zx}.
\begin{lemma}\label{L4}
If $f(q) = q^n + q^{p}a_{p} + q^{p-1}a_{p-1} + \cdots + qa_{1}+a_{0}$ is a quaternion polynomial of degree $n $ and $1 \leq p \leq \infty$, then
\begin{align*}
\underset{\left|q\right|=r}{Max}\left|f^{\prime}(q)\right| \leq n \underset{\left|q\right|=r}{Max}\left|f(q)\right|.
\end{align*}
\end{lemma}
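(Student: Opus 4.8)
I would prove Lemma~\ref{L4} --- the quaternionic Bernstein inequality --- by reducing it, in two steps, to the classical scalar Bernstein inequality for complex polynomials. \emph{Step 1 (reduce to a single slice).} Every quaternion $q$ with $|q|=r$ lies in a complex slice $\mathbb{C}_I=\mathbb{R}\oplus\mathbb{R}I$ ($I^{2}=-1$): take $I=\mathrm{Im}\,q/|\mathrm{Im}\,q|$ when $q\notin\mathbb{R}$, and any imaginary unit otherwise. On $\mathbb{C}_I$ the derivative is still $f'(z)=\sum_{j\ge1}jz^{\,j-1}a_j$, since $z$ commutes with its own powers, and $\{|q|=r\}$ is exactly the union of the circles $\{z\in\mathbb{C}_I:|z|=r\}$. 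Hence $\max_{|q|=r}|f'(q)|=\max_{I}\max_{z\in\mathbb{C}_I,\,|z|=r}|f'(z)|$ and likewise for $f$, so it is enough to prove the slicewise estimate $\max_{|z|=r,\,z\in\mathbb{C}_I}|f'(z)|\le n\max_{|z|=r,\,z\in\mathbb{C}_I}|f(z)|$ for each fixed $I$ (for general $r$ one gets the scalar normalization, a factor $1/r$; the displayed form is the case $r=1$).

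\emph{Step 2 (scalarize inside the slice).} Fix $I$ and view $P(z):=f(z)$, $z\in\mathbb{C}_I\cong\mathbb{C}$, as a degree-$n$ polynomial with coefficients $a_j\in\mathbb{H}$, where $\mathbb{H}$ is regarded as a finite-dimensional \emph{left} $\mathbb{C}_I$-vector space; with the Euclidean norm it is Hilbertian, because $|x+yJ|^{2}=|x|^{2}+|y|^{2}$ for $x,y\in\mathbb{C}_I$ and any $J\perp\mathbb{C}_I$. For $z_0$ with $|z_0|=1$, choose (Riesz representation) a left-$\mathbb{C}_I$-linear functional $\varphi:\mathbb{H}\to\mathbb{C}_I$ with $|\varphi(a)|\le|a|$ for all $a$ and $\varphi(P'(z_0))=|P'(z_0)|$. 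Then $\varphi\circ P$ is an ordinary complex polynomial of degree $\le n$ with $|\varphi(P(z))|\le|P(z)|$, and, since $\varphi$ is left-$\mathbb{C}_I$-linear, $(\varphi\circ P)'(z_0)=\varphi(P'(z_0))=|P'(z_0)|$. Applying scalar Bernstein to $\varphi\circ P$ gives $|P'(z_0)|=|(\varphi\circ P)'(z_0)|\le n\max_{|w|=1}|\varphi(P(w))|\le n\max_{|w|=1}|P(w)|$; maximizing over $z_0$ proves the slicewise estimate, and with Step 1 the lemma follows. The same duality step yields the $L^{p}$ version, $1\le p\le\infty$, from the scalar $L^{p}$ Bernstein inequality.

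\emph{Main obstacle.} The one subtle point is in Step 2: $\varphi$ must be $\mathbb{C}_I$-linear on the \emph{correct} side, so that $\varphi(\sum_j z^{j}a_j)=\sum_j z^{j}\varphi(a_j)$ really is a polynomial in $z$ and $\varphi$ commutes with $d/dz$; and one needs that $\mathbb{H}$, as a normed left $\mathbb{C}_I$-space, carries norm-one functionals attaining prescribed values, which is immediate here since the space is finite-dimensional (indeed Hilbertian). Everything else --- the slice reduction of Step 1 and the appeal to scalar Bernstein --- is routine and free of noncommutativity.
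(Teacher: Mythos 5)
The paper does not actually prove this lemma --- it is imported verbatim from Xu \cite{zx} with no argument given --- so what you have written is a self-contained proof where the paper offers only a citation, and it is essentially correct. Step 1 is sound: $\{|q|=r\}=\bigcup_I\{z\in\mathbb{C}_I:|z|=r\}$, and because the variable sits to the left of the coefficients, the formal derivative $\sum_{j\ge1}jq^{j-1}a_j$ restricts on each slice to the ordinary complex derivative of the restriction. Step 2 also goes through: writing $a=\alpha+\beta J$ with $\alpha,\beta\in\mathbb{C}_I$ and $J\perp\mathbb{C}_I$, the map $\varphi(\alpha+\beta J)=(\alpha\bar{\alpha}_0+\beta\bar{\beta}_0)/|v_0|$ is left-$\mathbb{C}_I$-linear, contractive, and norms $v_0=P'(z_0)$, so $\varphi\circ P$ is an honest complex polynomial of degree at most $n$ to which classical Bernstein applies; this duality step is exactly what is needed to avoid the spurious $\sqrt{2}$ that the naive estimate $|P'|^2=|A'|^2+|B'|^2\le n^2\bigl(\max|A|^2+\max|B|^2\bigr)$ would produce from the splitting $P=A+BJ$. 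Two caveats. First, you rightly note that the inequality as displayed is only valid for $r=1$ (take $f(q)=q^n$ and $r<1$ to see it fail); the correct general-$r$ statement is Lemma \ref{L5} with the factor $n/r$, and your argument delivers precisely that, which is all the paper ever uses. Second, your closing claim that the same duality step gives the $L^p$ version is too quick: the pointwise functional argument bounds $|P'(z_0)|$ against a sup norm, not an $L^p$ norm, so the integral versions would need the Riesz interpolation/convolution-kernel proof (which also transfers to quaternion-valued polynomials on a slice, since it is a scalar-coefficient linear identity). In context the ``$1\le p\le\infty$'' in the statement is a typo --- $p$ there is the degree of the second-highest term, $0\le p\le n-1$ --- so nothing in the paper depends on an $L^p$ statement.
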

Applying Lemma \ref{L4} to the quaternion polynomial $f(rq)$, where $r$ is any positive real number, we get:
\begin{lemma}\label{L5}
If $f(q) = q^n + q^{p}a_{p} + q^{p-1}a_{p-1} + \cdots + qa_{1}+a_{0}$ is a quaternion polynomial of degree $n $ and $1 \leq p \leq \infty$, then
\begin{align*}
\underset{\left|q\right|=r}{Max}\left|f^{\prime}(q)\right| \leq \frac{n}{r} \underset{\left|q\right|=r}{Max}\left|f(q)\right|.
\end{align*}
\end{lemma}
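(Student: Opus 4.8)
The plan is to obtain Lemma \ref{L5} from Lemma \ref{L4} by rescaling, following the cue in the sentence preceding the statement. Fix a positive real number $r$ and set $g(q) = f(rq)$. Since $r$ is real, and therefore central in $\mathbb{H}$, it commutes with every power of $q$ and with each coefficient, so
\begin{align*}
g(q) = r^{n}q^{n} + r^{p}q^{p}a_{p} + \cdots + rqa_{1} + a_{0}
\end{align*}
is again a quaternion polynomial of degree $n$. (If one insists on a monic polynomial in order to quote Lemma \ref{L4} verbatim, one may instead work with $r^{-n}g(q)$; the normalising factor cancels from both sides of the Bernstein-type inequality, so it is harmless.)

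The two facts I would establish next are a chain rule and a correspondence of maxima. Differentiating $g$ term by term -- the derivative of a quaternion polynomial $\sum_k q^{k}a_{k}$ being $\sum_k k q^{k-1}a_{k}$ -- and again using that $r$ is central, one gets $g'(q) = r\,f'(rq)$. Second, as $q$ ranges over the unit sphere $|q| = 1$, the point $w = rq$ ranges over the sphere $|w| = r$ because $|rq| = r|q|$; consequently
\begin{align*}
\max_{|q|=1}\left|g(q)\right| = \max_{|w|=r}\left|f(w)\right|, \qquad \max_{|q|=1}\left|g'(q)\right| = r\max_{|w|=r}\left|f'(w)\right|.
\end{align*}

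Putting these together finishes the argument. Applying Lemma \ref{L4} to $g$ on the unit sphere gives $\max_{|q|=1}|g'(q)| \leq n\max_{|q|=1}|g(q)|$; substituting the two displayed identities turns this into $r\max_{|w|=r}|f'(w)| \leq n\max_{|w|=r}|f(w)|$, and dividing through by the positive number $r$ produces the asserted bound $\max_{|w|=r}|f'(w)| \leq (n/r)\max_{|w|=r}|f(w)|$.

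The one step that deserves genuine care, and the one I expect to be the main obstacle, is the chain rule $g'(q) = r\,f'(rq)$: in a noncommutative algebra the classical chain rule is not available in general, so it must be checked directly from the term-by-term formula for the Cullen (slice) derivative. The argument succeeds precisely because the scaling factor $r$ is a real scalar lying in the center of $\mathbb{H}$, which is what allows the powers of $r$ to slide past the quaternion coefficients $a_{\nu}$ during differentiation. Everything after that -- the passage between the two spheres and the final division by $r$ -- is routine and uses only $|rq| = r|q|$.
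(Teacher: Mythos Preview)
Your proposal is correct and follows precisely the route the paper indicates: the paper's entire proof is the single sentence ``Applying Lemma \ref{L4} to the quaternion polynomial $f(rq)$, where $r$ is any positive real number, we get [Lemma \ref{L5}],'' and you have merely unpacked that sentence with the appropriate care about the chain rule and the correspondence of spheres. Your remark that the centrality of $r$ in $\mathbb{H}$ is what makes the rescaling work is a useful point the paper leaves implicit.
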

The next lemma is obtained by repeated application of Lemma \ref{L5}.
\begin{lemma}\label{L6}
If $f(q)$ is a quaternion polynomial of degree $n \geq 1$, and $r$, is any positive real number, then 
\begin{align*}
\underset{\left|q\right|=r}{Max}\left|f^{\prime}(q)\right| \leq \frac{n(n-1)\cdots(n-k+1)}{r^{k}} \underset{\left|q\right|=r}{Max}\left|f(q)\right|, \quad k = 1, 2, \cdots, n.
\end{align*}
\end{lemma}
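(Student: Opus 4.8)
The plan is to obtain the estimate by a straightforward induction on $k$, with Lemma \ref{L5} supplying both the base case and the inductive step. I read the left-hand side of the asserted inequality as $\max_{|q|=r}|f^{(k)}(q)|$, the maximum modulus of the $k$-th derivative (for $k=1$ this is verbatim Lemma \ref{L5}). First I would record the base case $k=1$: applying Lemma \ref{L5} to $f$ itself, a polynomial of degree $n$, gives exactly $\max_{|q|=r}|f'(q)| \le \frac{n}{r}\max_{|q|=r}|f(q)|$.

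For the inductive step, suppose the bound is known for some $k$ with $1 \le k \le n-1$. The key observation is that the $k$-th derivative $g := f^{(k)}$ is again a polynomial of the type to which Lemma \ref{L5} applies: differentiating $q^{j}a_{j}$ produces $q^{j-1}(j a_{j})$, so the coefficients stay on the right, and $g$ has degree $n-k \ge 1$ with leading coefficient the positive real number $c := n(n-1)\cdots(n-k+1)$. Writing $g(q) = c\,h(q)$ with $h$ monic of degree $n-k$, the scalar $c$ commutes with every quaternion, so $|g(q)| = c|h(q)|$ and $g'(q) = c\,h'(q)$ pointwise. Lemma \ref{L5} applied to $h$ then yields $\max_{|q|=r}|f^{(k+1)}(q)| = \max_{|q|=r}|g'(q)| \le \frac{n-k}{r}\max_{|q|=r}|g(q)| = \frac{n-k}{r}\max_{|q|=r}|f^{(k)}(q)|$. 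Feeding in the inductive hypothesis and using $(n-k)\cdot n(n-1)\cdots(n-k+1) = n(n-1)\cdots(n-k)$ gives the statement for $k+1$. Since the step is legitimate for every $k$ up to $n-1$, we reach all $k=1,2,\dots,n$.

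The lemma carries essentially no difficulty of its own: everything analytic is already in the Bernstein-type inequality of Lemma \ref{L5} (equivalently Lemma \ref{L4}, due to Zhenghua), and the rest is bookkeeping of the factorial constant. The one point worth stating explicitly is the passage from Lemma \ref{L5}, which is formulated for monic polynomials of the displayed shape, to the non-monic iterated derivative $f^{(k)}$; this is harmless precisely because the leading coefficient of $f^{(k)}$ is a \emph{real} scalar, so it factors out of both the modulus and the differentiation with no interference from noncommutativity. If one prefers not to assume $f$ monic at the outset, the identical factoring trick---now extracting a possibly non-real leading coefficient on the right---reduces the general case to the monic one.
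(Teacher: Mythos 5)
Your proof is correct and follows the same route the paper intends: the paper simply states that Lemma \ref{L6} is ``obtained by repeated application of Lemma \ref{L5}'', which is exactly your induction on $k$ (including the correct reading of the left-hand side as $\max_{|q|=r}|f^{(k)}(q)|$). Your extra remark about factoring the real leading coefficient of $f^{(k)}$ out of the modulus and the derivative is a useful justification of that repeated application, but it does not change the approach.
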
 
\section{\textbf{Proof of the main theorems}}
\begin{proof}[\bf{ Proof of Theorem \ref{tA}}]
By hypothesis, we have
\begin{equation}
\left|q_{p-k+1}\right| \leq M, \quad  k = 1,2,\cdots,{p+1}.
\end{equation}
We take
\begin{align}\label{t2e1}
\delta_{k} = \left[\frac{\left(1 + M\right)^{n}}{\left(1 + M\right)^{p+1} - 1}\right]\left[\frac{q_{p-k+1}}{\left(1 + M\right)^{n-p+k-1}}\right].
\end{align}
Then with the help of (1), we get
\begin{align}\nonumber
\sum_{k=1}^{p+1}\left|\delta_{k}\right| = \frac{\left(1 + M\right)^{n}}{\left(1 + M\right)^{p+1}-1}\sum_{k=1}^{p+1}\left|q_{p-k+1}\right|\frac{1}{\left(1 + M\right)^{n-p+k-1}}
\end{align}
\begin{align}
\leq \frac{\left(1 + M\right)^{n}}{\left(1 + M\right)^{p+1} - 1}\sum_{k=1}^{p+1}\frac{M}{\left(1 + M\right)^{n-p+k-1}}.
\end{align}
Now
\begin{equation}
\begin{split}
\sum_{k=1}^{p+1}\frac{M}{\left(1 + M\right)^{n-p+k-1}} &= \frac{M}{\left(1 + M\right)^{n-p}}\sum_{k=1}^{p+1}\frac{M}{\left(1 + M \right)^{k-1}}\\
&= \frac{M}{\left(1 + M\right)^{n-p}}\left[\frac{1 - \frac{1}{\left(1+M\right)^{p+1}}}{1 - \frac{1}{\left(1+M\right)}}\right]\\
&=\frac{\left(1 + M\right)^{p+1} - 1}{\left(1 + M\right)^{n}}. 
\end{split}
\end{equation}
Using $(10)$ in $(9)$, we obtain $\sum\limits_{k=1}^{p+1}\left|\delta_{k+1}\right| \leq 1$.
Applying Lemma \ref{L3} with $\delta_{k}$, $k = 1, 2,\cdots, {p+1}$ defined by (2), it follows that all the zeros of $f(q)$ lie in the ball
\begin{equation}\nonumber
\begin{split}
\left|q\right| &\leq \underset{1\leq k \leq {p+1}}{Max}\left|\frac{1}{\delta_{k}}q_{p-k+1}\right|^{\frac{1}{n-p+k-1}}\\
&= \underset{1\leq k\leq{p+1}}{Max}\left[\frac{\left(1 + M\right)^{n-p+k-1}\left\lbrace\left(1 + M\right)^{p+1} - 1\right\rbrace}{\left(1 + M\right)^{n}}\right]^{\frac{1}{n-p+k-1}}\\
&= \left(1 + M\right) \underset{1\leq k\leq{p+1}}{Max}\left[\frac{\left(1 + M\right)^{p+1} - 1}{\left(1 + M\right)^{n}}\right]^{\frac{1}{n-p+k-1}}\\
&= \left(1 + M \right)\left[\frac{\left(1 + M\right)^{p+1} - 1}{\left(1 + M\right)^{n}}\right]^{\frac{1}{n}}\\
&= \left[\left(1 + M\right)^{p+1} - 1\right]^{\frac{1}{n}}.
\end{split}
\end{equation}
This completes the proof of Theorem \ref{tA}.
\end{proof}
\begin{proof}[\bf{ Proof of Theorem \ref{tB}}]
Let $t$ be any positive real number and let $w = te^{i\alpha}$, $\alpha \in R$. Then by hypothesis 
\begin{align*}
\underset{\left|q\right|=t}{Max}f(q) = \left|q\left(te^{i\alpha}\right)\right| = \left|q(w)\right|.
\end{align*}
Now consider a polynomial
\begin{equation}\nonumber
\begin{split}
R(q) &= f\left(\frac{t}{n}q + w\right)\\
&= f(w) + \left(\frac{t}{n}\right)f^{\prime}(w)q + \left(\frac{t}{n}\right)^{2} f^{\prime\prime}(w)\frac{q^{2}}{2{!}} + \cdots + \left(\frac{t}{n}\right)^{n} f^{n}(w)\frac{q^{n}}{n{!}}.
\end{split}
\end{equation}
If $T(q) = q^{n}R\left(\frac{1}{q}\right)$, then we have
\begin{equation}\nonumber
\begin{split}
T(q) &= f(w)q^{n} + \left(\frac{q}{n}\right)f^{\prime}(w)q^{n-1} + \cdots + \left(\frac{t}{n}\right)^{n}\frac{f^{n}(w)}{n!}\\
&= \sum\limits_{j=0}^{n}\left(\frac{t}{n}\right)^{n-j}\frac{q^{(n-j)}(w)q^{j}}{(n-j)!}.
\end{split}
\end{equation}
Since $w = te^{i\alpha}$, by using Lemma \ref{L6}, we obtain
\begin{equation}\nonumber
\begin{split}
\left|f^{(n-j)}(w)\right| &= \left|f^{(n-j)} (te^{i\alpha})\right|\\
&\leq \frac{n(n-1)\cdots(j+1)}{t^{n-j}}\underset{\left|q\right|=t}{Max}\left|f(q)\right|\\
&\leq \frac{n(n-1)\cdots(j+1)}{t^{n-j}}\underset{\left|q\right|=t}{Max}\left|f(q)\right|\\
&\leq \frac{n(n-1)\cdots(j+1)}{t^{n-j}}\left|f(w)\right|\\
&\leq \frac {n^{n-j}}{t^{n-j}}\left|f(w)\right|\\
&= \left(\frac{n}{j}\right)^{(n-j)}\left|f(w)\right|, \quad j = 0, 1, \cdots,(n-1).
\end{split}
\end{equation} 
This implies
\begin{equation}\nonumber
\begin{split}
\left|\left(\frac{t}{n}\right)^{n-j}\frac{f^{(n-j)}(w)}{(n-j)!}\right| &= \left(\frac{t}{n}\right)^{n-j}\frac{\left|f^{(n-j)}(w)\right|}{(n-j)!}\\
&\leq \left(\frac{t}{n}\right)^{(n-j)}\left|f^{(n-j)}(w)\right|\\
&\leq \left|f(w)\right|, \quad j = 0, 1, \cdots, (n-1).
\end{split}
\end{equation}
Which shows that the polynomial $T(q)$ satisfies the conditions of Corollary \ref{c4}. Consequently all the zeros of $T(q)$ lie in the ball $\left|q\right| \leq \left(2^{n} - 1\right)^{\frac{1}{n}}$.
Since $R(q) = q^{n}T\left(\frac{1}{q}\right)$, it follows that all the zeros of $T(q)$ lie in the ball $\quad \left|q\right| \geq \frac{1}{\left(2^{n} - \right)^{\frac{1}{n}}}$.
Replacing $q$ by $\left(q - w\right)\left(\frac{n}{t}\right)$ and noting that $f(q) = R\left(q - w\right)\left(\frac{n}{t}\right)$, we conclude that the quaternion polynomial $f(q)$ does not vanish in the ball $\quad \left|q - w\right| < \frac{t}{n\left(2^{n} - 1\right)^{\frac{1}{n}}}$, which is the desired result.\\
This completes the proof of Theorem \ref{tB}.\\
\end{proof}
\section{Declarations}
\subsection{Availability of data and material}
Not applicable.
\subsection{Competing interests}
The authors declare that they have no competing interests.
\subsection{Funding}
None.
\subsection{Author's contributions}
All authors contributed equally to the writing of this paper. All authors read and approved the final manuscript.

\end{document}